\documentclass[11pt]{article}
\usepackage[margin=1in]{geometry}
\usepackage[T1]{fontenc}
\usepackage{microtype}
\usepackage{amsmath,amssymb,amsthm,mathtools}
\usepackage[sc,osf]{mathpazo}
\usepackage[round,authoryear]{natbib}
\usepackage[hidelinks]{hyperref}
\usepackage{booktabs}

\newtheorem{theorem}{Theorem}[section]
\newtheorem{proposition}[theorem]{Proposition}
\newtheorem{lemma}[theorem]{Lemma}

\theoremstyle{remark}

\numberwithin{equation}{section}

\newcommand{\E}{\mathbb E}
\newcommand{\Prob}{\mathbb P}
\newcommand{\R}{\mathbb R}
\newcommand{\N}{\mathbb N}
\newcommand{\one}{\mathbf 1}
\newcommand{\TV}{\operatorname{TV}}

\newcommand{\Bin}{\operatorname{Bin}}
\newcommand{\Ber}{\operatorname{Bernoulli}}
\newcommand{\Cov}{\operatorname{Cov}}
\newcommand{\cum}{\operatorname{cum}}
\newcommand{\logit}{\operatorname{logit}}
\newcommand{\riskP}{\mathfrak P}
\newcommand{\riskR}{\mathfrak R}
\newcommand{\erradj}{\mathcal E}

\title{Network recovery from aggregated relational data:\\
  An impossibility theorem}
\author{Yen-hsuan Tseng}
\date{September 2026}
\hypersetup{
  pdftitle={Network recovery from aggregated relational data: An impossibility theorem},
  pdfauthor={Yen-hsuan Tseng},
  pdfkeywords={aggregated relational data, network recovery, identifiability,
    minimax lower bounds, stochastic block models}
}

\begin{document}
\maketitle

\begin{abstract}
We prove an impossibility theorem for uniformly consistent recovery of
labeled edge probabilities from complete aggregated relational data, even
when the population count law identifies every probability. For any known
partition into two equal trait groups, we consider an independent-edge
logistic network with a balanced rank-one signal and unknown activity
effects with bounded total dyadic energy. The signal amplitude is known
and fixed at a small positive value.
Every node reports its counts to both groups. The minimax mean squared
error for the probability matrix remains bounded away from zero as the
network grows, whereas it tends to zero under full adjacency on the same
parameter class. The lower bound accounts for the dependence across the
entire count array through a two-node oracle from which all observed
counts can be reconstructed. Conditional binomial smoothing bounds the
information about a local sign orientation, and an anchored many-bit
construction converts these ambiguities into a nonvanishing normalized
matrix loss. A mixed-cumulant identity recovers every edge probability
from the population degree law, locating the obstruction in estimation
from a single aggregated network rather than in population identification.
\end{abstract}

\noindent\textbf{Keywords:} aggregated relational data; identifiability;
minimax estimation; network reconstruction; statistical experiments.

\section{Introduction}\label{sec:introduction}

We prove that uniformly consistent recovery of the labeled
edge-probability matrix is impossible from a single complete array of
aggregated relational data in a balanced rank-one logistic network.
Theorem~\ref{thm:probability} gives a positive asymptotic lower bound on
the minimax normalized squared error for any known partition into two
equal trait groups. All nodes are respondents, the known signal amplitude is
a small positive constant, and unknown additive activity effects have
bounded total dyadic energy. Full adjacency permits vanishing risk over
the same parameter class. The comparison isolates a statistical cost of
aggregation: the edge probabilities can be recovered uniformly from the
graph, but no estimator can do so from its complete array of trait counts.

Aggregated relational data record how many members of each known trait
group a respondent is connected to, replacing individual edge indicators
by sums. Their ability to support network recovery depends on both the
network model and the trait design. Complete participation ensures that
every edge contributes to the observed array. It also creates dependence
across respondents, since the same undirected edge enters the counts of
both endpoints. A recovery lower bound must account for the information
in this full joint law.

That dependence is sufficient for population-law identification.
Proposition~\ref{prop:identification} recovers each labeled edge
probability from second- and third-order mixed cumulants of the degree
vector. The identity holds for every finite undirected independent-edge
graph with probabilities in the interior of the unit interval, and
the degree vector is a function of the complete trait counts.
Thus the impossibility theorem applies in an identifiable experiment.
The finite-size population law uniquely determines the probability
matrix, while a single observed array cannot support uniformly consistent
recovery as the network grows.

The difficult parameters place equal numbers of the two latent signs
inside each trait group. Swapping the signs of two nodes in the same group
changes their labeled connection probabilities. To compare the resulting
joint count laws, we construct a more informative oracle that reveals
all edges outside the pair, the edge between the pair and, for every other
node, the sum of its two incident edges to the pair. Together with the
counts of one selected node, these observations reconstruct the entire
aggregated array. After conditioning on the revealed information, the
remaining experiment consists of two binomial-sum channels. Each channel
contains a common binomial component and an unmatched component whose
size is a random sign imbalance. Smoothing by the common component bounds
the total variation distance between the two orientations. An anchored many-bit
construction turns the local overlaps into a constant lower bound for
normalized matrix loss.

\subsection{Related work}

\citet{breza2023consistently} derive conditions under which aggregated
relational data consistently estimate network-model parameters and
network statistics. In their stochastic block model, separation of the
community-to-trait linking profiles supports recovery of community
membership. The lower-bound family studied here has equal limiting
profiles for its two communities. Our comparison therefore concerns the
information remaining in the full joint count law when those profiles
coincide. The distinction between estimating the probability matrix and
predicting an individual realized edge is also important: the Bernoulli
prediction variance and the minimax risk for an unknown probability
matrix are different quantities.

\citet{alidaee2020recovering} study probability-matrix recovery by
nuclear-norm penalized regression. Their guarantees use random traits
with independent entries and positive variation, and their consistency
result allows the number of traits to grow. These results demonstrate
how a sufficiently informative design supports recovery. The fixed
partition studied here provides a complementary setting in which
within-group variation in latent memberships is hidden by aggregation.

Recovery from coarsened networks is studied by
\citet{ghoroghchian2021graph}, who observe a two-sided aggregation and
estimate community profiles of the resulting coarse nodes. For a
partition matrix \(T\), their type of observation is \(T^\top A T\),
whereas aggregated relational data retain the finer array \(AT\).
The labeled node-level probability matrix is the target in the present
paper. This observation--target pair makes the dependence across all
respondents part of the recovery problem.

The broader distinction between learning a sum distribution and recovering
its constituent parameters appears in the Poisson-binomial literature.
\citet[Proposition~15]{diakonikolas2016properly} show an exponential
sample-complexity lower bound for estimating the Bernoulli parameter
multiset, using moment-matched parameter vectors. Here the zero-activity
family uses two fixed known probabilities. Its uncertainty concerns their
assignment to labeled dyads, and the data are the dependent counts of one
graph. The two-node oracle connects this graph experiment to a
conditional binomial-smoothing argument.

\subsection{Organization and notation}

Section~\ref{sec:model} states the model and recovery bounds and derives
the population-law identification identity. Section~\ref{sec:local}
analyzes the local orientation experiment.
Section~\ref{sec:lower} assembles the matrix-loss lower bounds, and
Section~\ref{sec:upper} proves the adjacency upper bounds.
Section~\ref{sec:discussion} discusses trait design.

For probability measures \(Q,Q'\), we use
\(\TV(Q,Q')=\frac12\int|dQ-dQ'|\). When random variables appear as the
arguments of \(\TV\), their laws are intended. All matrices indexed by
nodes retain the given node labels. Matrix losses below sum over ordered
off-diagonal entries.

\section{Model and recovery separation}\label{sec:model}

Let \(r=2m\), with \(m\in4\N\), and partition the node set \([r]\)
into known groups \(G_1,G_2\), each of size \(m\). Write
\[
T_{ik}=\one_{\{i\in G_k\}},\qquad i\in[r],\quad k\in\{1,2\}.
\]
The adjacency matrix \(A\) is symmetric with zero diagonal, and its
upper-triangular entries are independent:
\begin{equation}\label{eq:model}
 A_{ij}\sim\Ber(p_{ij}),\qquad
 p_{ij}=\sigma(\alpha_i+\alpha_j+\tau s_i s_j),\quad i<j,
 \qquad \sigma(z)=\frac{1}{1+e^{-z}}.
\end{equation}
The signal amplitude \(\tau\) is known and satisfies
\begin{equation}\label{eq:amplitude}
 0<\tau\le\operatorname{arctanh}(1/16).
\end{equation}
Fix a known \(M>0\). The unknown parameter belongs to
\begin{equation}\label{eq:class}
 \Theta_r(M)=
 \left\{(\alpha,s)\in\R^r\times\{-1,1\}^r:
 \one^\top s=0,\quad
 \frac14\sum_{i<j}(\alpha_i+\alpha_j)^2\le M^2\right\}.
\end{equation}
Both \(M\) and \(\tau\) are fixed as \(r\) grows. All asymptotic
statements are along \(r\in8\N\).

The activity constraint can be written as
\begin{equation}\label{eq:activity-identity}
 (r-2)\|\alpha\|_2^2+(\one^\top\alpha)^2
 =\sum_{i<j}(\alpha_i+\alpha_j)^2\le4M^2.
\end{equation}
Thus the class permits an unknown activity vector with
\(\|\alpha\|_2=O(r^{-1/2})\). This normalization keeps the total dyadic
activity energy bounded while the number of unknown sign coordinates
grows linearly in \(r\).

Every node reports its counts to both groups:
\begin{equation}\label{eq:ard}
 Y=AT,\qquad
 Y_{ik}=\sum_{j\in G_k}A_{ij}.
\end{equation}
The statistical experiment is the joint distribution of this entire
labeled array. We compare observing \(Y\) with observing \(A\), keeping
\(\Theta_r(M)\), \(T\), and the target fixed.

\subsection{Probability-matrix recovery}

For off-diagonal arrays \(H,\widehat H\), define
\begin{equation}\label{eq:loss}
 L_r(\widehat H,H)
 =\frac{1}{r(r-1)}\sum_{i\ne j}(\widehat H_{ij}-H_{ij})^2.
\end{equation}
Write \(P_{\alpha,s}=(p_{ij})_{i\ne j}\) and let
\begin{equation}\label{eq:riskP}
 \riskP_r^O=
 \inf_{\widehat P(O,T)}
 \sup_{(\alpha,s)\in\Theta_r(M)}
 \E_{\alpha,s}L_r(\widehat P,P_{\alpha,s}),
 \qquad O\in\{Y,A\}.
\end{equation}
The infimum includes randomized estimators and imposes no computational
restriction. Estimators may use the known constants \(M,\tau\).

For the bounds, put
\begin{equation}\label{eq:constants}
 \kappa=\sigma(\tau)-\frac12,\qquad \lambda=\tanh\tau,
 \qquad
 \delta_m=\min\left\{1,\,
 4\sqrt2\lambda+4e^{-(m/2-1)/16}\right\},
\end{equation}
and
\begin{equation}\label{eq:adj-error}
 \erradj_r=\left(1+e^{-\kappa^2r/4}\right)^r-1.
\end{equation}

\begin{theorem}[Probability-matrix recovery]\label{thm:probability}
Under \eqref{eq:model}--\eqref{eq:class}, for every known partition into
two groups of size \(m\),
\begin{equation}\label{eq:P-lower}
 \riskP_r^Y\ge
 \frac{r\kappa^2}{2(r-1)}(1-\delta_m).
\end{equation}
For \(r-1\ge M^2/\kappa^2\),
\begin{equation}\label{eq:P-upper}
 \riskP_r^A\le
 \frac{M^2}{2r(r-1)}+2\erradj_r.
\end{equation}
Consequently,
\begin{equation}\label{eq:P-separation}
 \liminf_{r\to\infty}\riskP_r^Y
 \ge\frac{\kappa^2}{2}
       \bigl(1-4\sqrt2\tanh\tau\bigr)>0,
 \qquad
 \riskP_r^A\longrightarrow0.
\end{equation}
\end{theorem}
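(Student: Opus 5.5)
The plan is to prove the lower bound \eqref{eq:P-lower} with an Assouad-type argument on the zero-activity subfamily, to prove \eqref{eq:P-upper} with an explicit sign-plug-in estimator, and then to read off \eqref{eq:P-separation}. The limit statement follows because \(\delta_m\to\min\{1,4\sqrt2\lambda\}\) and \(4\sqrt2\tanh\tau\le 4\sqrt2/16<1\) under \eqref{eq:amplitude}. It also uses that \(M^2/(2r(r-1))\to0\) and \(\erradj_r\le \exp(re^{-\kappa^2 r/4})-1\to0\).

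\textbf{Lower bound.} Set \(\alpha=0\), which lies in \(\Theta_r(M)\) for every \(M>0\). Every probability is then \(p_{ij}=\tfrac12+\kappa s_is_j\), so two sign vectors give entries differing by exactly \(2\kappa\) wherever \(s_is_j\ne s'_is'_j\). The global flip \(s\mapsto -s\) leaves \(P\) unchanged, so I will anchor the construction.

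\emph{Construction.} Inside each group \(G_k\), fix an anchor half of \(m/2\) nodes with known, balanced signs. Split the remaining \(m/2\) free nodes into \(m/4\) pairs \((i,i')\); this is where \(m\in4\N\) is used. A bit \(\omega_{ii'}\in\{0,1\}\) decides which member of the pair has sign \(+1\). Every such \(s\) satisfies \(\one^\top s=0\). There are \(r/4\) bits in total.

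\emph{Loss decomposition.} Restrict the loss to ordered entries \((u,a)\) and \((a,u)\) with \(u\) free and \(a\) an anchor. On these entries, \(s_us_a\) depends on only one bit, so the restricted loss is additive across bits. Flipping one bit changes both rows \(u\in\{i,i'\}\) against all \(r/2\) anchors, in both orders, each by \((2\kappa)^2\).

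\emph{Assouad.} The standard Assouad reduction, projecting any \(\widehat P\) onto each bit's coordinates, then gives
\[
\riskP_r^Y\ \ge\ c\,\frac{r\kappa^2}{r-1}\Bigl(1-\max_{\omega\sim\omega'}\TV\bigl(\mathcal L_\omega(Y),\mathcal L_{\omega'}(Y)\bigr)\Bigr).
\]
Here \(\omega\sim\omega'\) means the two bit vectors differ in one bit, and I expect the bookkeeping to give \(c=1/2\). The remaining task is to show that this total variation is at most \(\delta_m\), uniformly over the values of the other bits.

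\textbf{Local experiment (main obstacle).} Fix all other signs and compare the two orientations of a pair \((i,i')\) in the same group. Here I invoke the two-node oracle described in the introduction and analyzed in Section~\ref{sec:local}. It reveals all edges outside \(\{i,i'\}\), the edge \(A_{ii'}\), and \(A_{ij}+A_{i'j}\) for every other \(j\). Together with the counts \(Y_{i\cdot}\) of node \(i\), these reconstruct all of \(Y\), so by data processing
\[
\TV(\text{law of }Y)\ \le\ \TV(\text{law of oracle}).
\]
The revealed part has the same law under both orientations. Conditionally on it, what remains is node \(i\)'s two group counts.

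\emph{Two binomial-sum channels.} For nodes \(j\) with \(A_{ij}+A_{i'j}=1\), which endpoint carries the edge is \(\Ber(\tfrac12\pm\lambda)\)-biased: \(\lambda\) comes from the ratio \((1/2+\kappa)/(1/2-\kappa)\), and the direction of the bias depends on \(s_j\). In each group, nodes with opposite signs cancel in pairs into a common, unbiased binomial part. What is left is an unmatched part whose size is a random sign imbalance of order \(\sqrt m\), and whose mean shift between the two orientations is of order \(\lambda\cdot\)imbalance.

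\emph{Smoothing and the hard step.} Smoothing by the common binomial, whose size is at least about \(m/2\) except with probability \(e^{-(m/2-1)/16}\) by a Chernoff bound, should bound the conditional total variation by \(2\sqrt2\lambda\cdot\)(imbalance\(/\sqrt{\text{common size}}\)). Averaging then gives \(4\sqrt2\lambda\) plus the two tail terms. The hard step is controlling this ratio uniformly: I will first try a Hellinger/\(\chi^2\) bound on shifted binomials, combined with \(\E|\text{imbalance}|\le\sqrt{\mathrm{Var}}\).

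\textbf{Upper bound.} From \(A\), estimate each \(s_i\) up to the global sign. Fix a reference node and, for every \(i\), compare the within-row statistics \(\sum_j A_{ij}(2A_{1j}-1)\); equivalently, take a sign of the row correlation. Then output \(\widehat p_{ij}=\sigma(\tau\hat s_i\hat s_j)\), with activity estimated as zero.

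\emph{Error event.} Under \(\sum_{i<j}(\alpha_i+\alpha_j)^2\le 4M^2\) and \(r-1\ge M^2/\kappa^2\), the activity perturbs each row mean by less than the \(\kappa\)-scale separation. Hoeffding then gives a per-node error probability of at most \(e^{-\kappa^2 r/4}\). Summing over nodes with a product or union bound, refined to \((1+e^{-\kappa^2r/4})^r-1\) to cover dependent failures, bounds the error event by \(\erradj_r\). Since the loss is at most \(1\), with an extra factor for the two sign conventions, the error event contributes at most \(2\erradj_r\).

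\emph{Good event.} When \(\hat s\) is correct, \(\sigma\) is \(1/4\)-Lipschitz, so
\[
L_r\le \frac{2}{r(r-1)}\cdot\frac1{16}\sum_{i<j}(\alpha_i+\alpha_j)^2\le \frac{M^2}{2r(r-1)},
\]
which completes \eqref{eq:P-upper}.
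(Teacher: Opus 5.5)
\textbf{The gap is in the adjacency bound \eqref{eq:P-upper}.} Your reference-node estimator replaces the unknown \(s_1s_j\) by \(2A_{1j}-1\), but that proxy carries \(s_1s_j\) only at scale \(\kappa\).

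At \(\alpha=0\), \(\E[A_{ij}(2A_{1j}-1)]=\kappa s_1s_j+2\kappa^2 s_1s_i\). After summing over \(j\notin\{1,i\}\), the first terms cancel to \(O(\kappa)\) by balance. The separation is therefore only \(2\kappa^2(r-2)\), against \(r-2\) independent summands of range \(2\) and variance near \(1/2\). Hoeffding then gives a per-node error of order \(\exp\{-c\kappa^4 r\}\). This is the true order, since the large-deviation exponent is about \(4\kappa^4 r\). For the small \(\kappa\) allowed by \eqref{eq:amplitude}, that error is far larger than \(e^{-\kappa^2 r/4}\). The centered row correlation has the same \(\kappa^2\)-scale signal. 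So your per-node claim fails, not just its proof.

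The uncentered statistic is also not robust at the stated threshold. Take \(\alpha_1=2M/\sqrt{r-1}\) with all other activities zero; this lies in \(\Theta_r(M)\). It adds about \(\sum_j\varepsilon_{1j}\approx M\sqrt r/2\) to every row statistic. At \(r-1=M^2/\kappa^2\) this is of order \(\kappa r\), which overwhelms the \(\kappa^2 r\) signal. With the centered version and a weaker explicit rate you could still get \(\riskP_r^A\to0\), but not \eqref{eq:P-upper} as stated.

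The missing idea is to compare whole sign vectors against the truth, so that every disagreeing dyad carries the full signal \(2\kappa\). The paper lets \(\widehat s\) maximize \(Q_A(t)=\sum_{i<j}(A_{ij}-1/2)t_it_j\) over balanced \(t\) with \(t_1=1\). The argument then runs as follows.
\begin{enumerate}
\item A competitor at Hamming distance \(h\) from \(\{s,-s\}\) disagrees with \(ss^\top\) on \(d=h(r-h)\ge r-1\) dyads.
\item Cauchy--Schwarz with \(\sum_{i<j}\varepsilon_{ij}^2\le M^2/4\) gives a mean score gap of at least \(2\kappa d-M\sqrt d\ge\kappa d\), exactly when \(d\ge M^2/\kappa^2\).
\item Hoeffding gives \(e^{-\kappa^2 d/2}\) for each competitor.
\item A union bound over at most \(2\binom rh\) competitors, using \(h(r-h)\ge hr/2\), yields \(2\erradj_r\).
\end{enumerate}

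\textbf{Your lower-bound plan is the paper's argument.} It uses the same anchored hypercube, the same blocks of \(2r\) ordered entries, the constant \(1/2\), and the same oracle and smoothing, and your constants are consistent with it. Two corrections:
\begin{itemize}
\item Given \(A_{ij}+A_{i'j}=1\), the edge sits at \(i\) with probability \((1\pm\lambda)/2\), not \(\tfrac12\pm\lambda\), because the conditional odds are \(e^{\pm2\tau}\).
\item You do not need Hellinger or \(\chi^2\). The paper couples the two unmatched binomials through common uniforms, so \(\E|U_+-U_-|=d|\lambda|\). It then combines convexity of total variation with \(\TV(S,S+1)=\max_j\Prob(S=j)\le 2/\sqrt k\). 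This gives the conditional bound \(2|\lambda|d/\sqrt k\). Averaging over the Chernoff event \(k\ge n\rho/2\), with \(\E d\le\sqrt{2n\rho(1-\rho)}\), produces exactly the constant \(4\sqrt2\) in \(\delta_m\).
\end{itemize}
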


The positive lower bound rules out uniformly consistent estimation of
the labeled probability matrix from complete ARD, including randomized
estimators with unrestricted computation. The construction in
Section~\ref{sec:lower} uses zero activity and signs balanced separately
within each group. The adjacency upper bound holds over the full class
\eqref{eq:class}, with unknown activity and only global sign balance.
The two bounds therefore compare the observations under the same target,
loss and parameter class.

\subsection{The additive-residual signal}

The model has a centered Gram representation. With \(x=\sqrt{\tau}\,s\),
we have \(\one^\top x=0\) and \(xx^\top=\tau ss^\top\).
Let \(\eta_{ij}=\logit(p_{ij})\). Its off-diagonal residual after
orthogonal projection onto the space of additive arrays is
\begin{equation}\label{eq:residual}
 R_s(i,j)=\tau\left(s_i s_j+\frac{1}{r-1}\right),\qquad i\ne j.
\end{equation}
Indeed, global balance gives
\[
 \sum_{j\ne i}R_s(i,j)=0,\qquad
 \eta_{ij}-R_s(i,j)=\alpha_i+\alpha_j-\frac{\tau}{r-1}.
\]
The first identity makes \(R_s\) orthogonal to every array
\((a_i+a_j)_{i\ne j}\) under the unweighted off-diagonal inner product.
The second places the difference in that additive space.

Define \(\riskR_r^O\) by replacing \(P_{\alpha,s}\) with \(R_s\)
in \eqref{eq:riskP}.

\begin{theorem}[Residual recovery]\label{thm:residual}
Under the assumptions of Theorem~\ref{thm:probability},
\begin{equation}\label{eq:R-lower}
 \riskR_r^Y\ge\frac{r\tau^2}{2(r-1)}(1-\delta_m).
\end{equation}
For \(r-1\ge M^2/\kappa^2\),
\begin{equation}\label{eq:R-upper}
 \riskR_r^A\le8\tau^2\erradj_r.
\end{equation}
In particular,
\[
 \liminf_{r\to\infty}\riskR_r^Y
 \ge\frac{\tau^2}{2}(1-4\sqrt2\tanh\tau)>0,
 \qquad \riskR_r^A\longrightarrow0.
\]
\end{theorem}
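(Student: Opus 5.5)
This theorem is the residual analogue of Theorem~\ref{thm:probability}. I would prove it by reusing the same lower-bound construction and the same adjacency estimator, and only changing how the target error is converted into a loss.

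\emph{Lower bound.} I would take the zero-activity family from Section~\ref{sec:lower}: signs balanced within each group, arranged in anchored pairs whose orientation is flipped bit by bit. Since \(\alpha=0\) lies in \(\Theta_r(M)\), the experiment is identical to the one behind \eqref{eq:P-lower}. Only the per-entry separation changes.

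For two sign vectors differing by a swap inside a pair, the residual \(R_s(i,j)=\tau(s_is_j+1/(r-1))\) changes by \(\pm2\tau\) on the affected entries. The corresponding probability gap is \(\sigma(\tau)-\sigma(-\tau)=2\kappa\). So every step of the Assouad-type many-bit argument that produced the factor \(\kappa^2\) goes through verbatim with \(\tau^2\) in its place. The \(1/(r-1)\) shift is common to all hypotheses and does not affect the differences.

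Concretely, I would recheck one place: the bound on the loss per bit must be a sum of squared half-differences over the affected entries. That sum is \(\tau^2\) times the same entry count that gave \(\kappa^2\), hence \(r\tau^2/(2(r-1))\). The two-point total variation bound \(\delta_m\) from Section~\ref{sec:local} is then applied unchanged, which yields \eqref{eq:R-lower}.

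\emph{Upper bound.} Under full adjacency I would use the sign estimator from Section~\ref{sec:upper}, built for \eqref{eq:P-upper}. On the event that it recovers \(s\) up to global sign, plug in \(\widehat R=R_{\widehat s}\). This incurs zero loss, because \(R_s=R_{-s}\). On the complementary event, whose probability is bounded by \(\erradj_r\) under \(r-1\ge M^2/\kappa^2\), each entry satisfies \(|\widehat R_{ij}-R_{ij}|\le2\tau\) after clipping the shift. Hence the loss is at most \(4\tau^2\) up to the \(1/(r-1)\) correction.

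To obtain exactly \(8\tau^2\erradj_r\), I would either bound the error entrywise by \(2\tau(1+1/(r-1))\) and absorb the correction, or follow however the probability proof obtained its factor \(2\). The key point is that no activity term appears: \(R_s\) does not depend on \(\alpha\), so the \(M^2/(2r(r-1))\) term drops out.

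\emph{Limit.} The final display follows from \(\delta_m\to\min\{1,4\sqrt2\lambda\}\), from \(4\sqrt2\tanh\tau<1\) under \eqref{eq:amplitude}, and from \(\erradj_r\to0\).

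The main obstacle is confirming that the lower-bound argument of Section~\ref{sec:lower} depends on the target only through entrywise differences under sign swaps. If it instead used a specific anchoring of absolute values, I would redo the per-bit reduction directly for \(R_s\). That step is easy, since \(R_s\) is an affine function of \(ss^\top\).
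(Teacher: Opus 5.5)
Your plan follows the paper's proof. The lower bound is the anchored hypercube argument of Section~\ref{sec:lower}: on the disjoint blocks \(I_\ell\) one has \(\|v_{\ell,+}-v_{\ell,-}\|_2^2=8r\tau^2\), and the paper in fact proves \eqref{eq:R-lower} first and then gets \eqref{eq:P-lower} by replacing \(\tau\) with \(\kappa\). The upper bound uses \(\widehat R=R_{\widehat s}\) together with Lemma~\ref{lem:adjacency}.

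The one thing to fix is the constant bookkeeping in the upper bound.

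\begin{itemize}
\item Lemma~\ref{lem:adjacency} bounds the failure probability by \(2\erradj_r\), not by \(\erradj_r\).
\item There is no \(1/(r-1)\) correction to absorb. The entrywise error is \(R_{\widehat s}(i,j)-R_s(i,j)=\tau(\widehat s_i\widehat s_j-s_is_j)\in\{0,\pm2\tau\}\), so the shift \(\tau/(r-1)\) cancels exactly, just as you noted in the lower bound.
\end{itemize}

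So the loss on the failure event is at most exactly \(4\tau^2\), and \(8\tau^2\erradj_r=4\tau^2\cdot2\erradj_r\). The factor \(2\) comes from the lemma, not from the loss bound.

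Your fallback of using the entrywise bound \(2\tau(1+1/(r-1))\), combined with the correct probability \(2\erradj_r\), would only give \(8\tau^2(1+1/(r-1))^2\erradj_r\). That still yields \(\riskR_r^A\to0\), but not \eqref{eq:R-upper} as stated.
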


Both targets are invariant under the global sign reversal \(s\mapsto-s\).
Because their coordinates retain the node labels, recovery requires
assigning the sign products to the correct labeled dyads.

\subsection{Identification from the population count law}

The recovery obstruction in Theorem~\ref{thm:probability} persists even
though the population count law identifies the entire probability
matrix. The inversion below holds for a general independent-edge graph.

\begin{proposition}[Degree-cumulant inversion]\label{prop:identification}
Let \(A\) be the adjacency matrix of an undirected indepen\-dent-edge graph
with \(0<p_{ij}<1\) for all \(i<j\), and let \(D=A\one\).
For \(i\ne j\),
\begin{align}
 c_{2,ij}:=\Cov(D_i,D_j)
 &=p_{ij}(1-p_{ij}),\label{eq:cumulant2}\\
 c_{3,ij}:=\cum(D_i,D_i,D_j)
 &=p_{ij}(1-p_{ij})(1-2p_{ij}).\label{eq:cumulant3}
\end{align}
Hence the joint degree distribution identifies every labeled edge
probability through
\begin{equation}\label{eq:inverse}
 p_{ij}=\frac12\left(1-\frac{c_{3,ij}}{c_{2,ij}}\right).
\end{equation}
For any exhaustive partition matrix \(T\), the joint law of \(AT\)
also identifies these probabilities.
\end{proposition}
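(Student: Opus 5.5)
The proof is a direct cumulant computation based on multilinearity and the independence of the upper-triangular edges. Write $D_i=\sum_{k\ne i}A_{ik}$ and, for fixed $i\ne j$, split off the shared edge:
\[
 D_i=A_{ij}+U_i,\qquad D_j=A_{ij}+U_j,
\]
where $U_i=\sum_{k\ne i,j}A_{ik}$ and $U_j=\sum_{k\ne i,j}A_{jk}$. The three variables $A_{ij}$, $U_i$, $U_j$ are mutually independent, because they are functions of disjoint sets of upper-triangular edges. The pairs $\{i,k\}$ and $\{j,k\}$ with $k\notin\{i,j\}$ are distinct from each other and from $\{i,j\}$.

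Joint cumulants are multilinear, and any mixed cumulant of independent blocks vanishes. Expanding $\Cov(D_i,D_j)$ therefore leaves only $\cum(A_{ij},A_{ij})=\operatorname{Var}(A_{ij})=p_{ij}(1-p_{ij})$, which is \eqref{eq:cumulant2}. Expanding $\cum(D_i,D_i,D_j)$ into eight terms, every term containing $U_j$ together with anything else vanishes. The only other term involving $U_j$ alone would be $\cum(U_i,U_i,U_j)$ or its analogues, and these mix independent blocks as well. Hence only the term with all three arguments equal to $A_{ij}$ survives. That term is the third cumulant of a Bernoulli($p$) variable, namely $p(1-p)(1-2p)$, which gives \eqref{eq:cumulant3}. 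The third cumulant can be checked directly as the third central moment, $p(1-p)^3-(1-p)p^3=p(1-p)(1-2p)$.

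Since $0<p_{ij}<1$, we have $c_{2,ij}>0$, and dividing gives $c_{3,ij}/c_{2,ij}=1-2p_{ij}$, which rearranges to \eqref{eq:inverse}. All degree moments are finite because $D$ is bounded. These cumulants are therefore functionals of the joint law of $D$, and that law determines every $p_{ij}$.

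For the final claim, exhaustiveness of the partition means $T\one=\one$, so $D=A\one=AT\one$ is a deterministic linear function of $AT$. The joint law of $AT$ thus determines the law of $D$, and identification follows from the previous step. There is no real obstacle here. The only point that needs care is the independence bookkeeping, specifically that the single shared edge $\{i,j\}$ is the only common component of $D_i$ and $D_j$; I would state this explicitly before invoking multilinearity.
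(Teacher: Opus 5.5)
Your proof is correct and follows the paper's argument essentially step for step. It uses the same split $D_i=A_{ij}+U_i$, $D_j=A_{ij}+U_j$ into mutually independent pieces, multilinearity plus vanishing of mixed cumulants to isolate $\cum(A_{ij},A_{ij},A_{ij})$, the Bernoulli third cumulant, and $T\one=\one$ for the final claim. The one blemish is the muddled sentence about terms involving ``$U_j$ alone''. It is cleaner to say that each of the seven other terms contains at least two distinct variables from the mutually independent triple $(A_{ij},U_i,U_j)$, and hence vanishes.
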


\begin{proof}
Fix \(i\ne j\) and write
\[
 D_i=A_{ij}+U_i,\qquad D_j=A_{ij}+U_j,
\]
where \(A_{ij},U_i,U_j\) are mutually independent. The covariance is
therefore the variance of \(A_{ij}\), proving \eqref{eq:cumulant2}.
Multilinearity of joint cumulants, together with their vanishing when
the arguments split into independent groups, leaves only the term
\(\cum(A_{ij},A_{ij},A_{ij})\) in the third mixed cumulant.
The third cumulant of a Bernoulli variable with probability \(p\) is
\(p(1-p)(1-2p)\), yielding \eqref{eq:cumulant3}.
Since \(c_{2,ij}>0\), division gives \eqref{eq:inverse}.
An exhaustive partition satisfies \(T\one=\one\), so \(D=(AT)\one\)
is a function of the aggregated observations.
\end{proof}

In particular, different probability matrices in the lower-bound family
have different fixed-label ARD laws. The identification formula uses
moments of the population distribution; the minimax risks in
Theorems~\ref{thm:probability} and \ref{thm:residual} concern one observed
array. These are the two statistical regimes separated by the results.

\section{A local orientation experiment}\label{sec:local}

Throughout this section, set \(\alpha=0\) and assume that each trait
group contains equal numbers of positive and negative signs. Select
two nodes \(u,v\) in the same group \(G_b\), give them signs
\((\omega,-\omega)\), and fix the signs of every other node.
The two hypotheses are \(\omega=1\) and \(\omega=-1\).
Write \(Q_+^Y,Q_-^Y\) for their joint ARD laws.

\subsection{An oracle retaining the full array}

For \(j\notin\{u,v\}\), let \(C_j=A_{uj}+A_{vj}\). Define
\begin{equation}\label{eq:oracle}
 W=\left((A_{ij})_{i,j\notin\{u,v\}},\,A_{uv},\,
                (C_j)_{j\notin\{u,v\}}\right),\qquad
 \mathcal O=(W,Y_{u1},Y_{u2}).
\end{equation}
Also put
\begin{equation}\label{eq:rho}
 \mu=\tanh(\tau/2),\qquad
 \rho=\frac{1+\mu^2}{2},\qquad a=\frac{1+\lambda}{2}.
\end{equation}

\begin{lemma}[Reconstruction and conditional law]\label{lem:oracle}
The following properties hold for the two orientation hypotheses.
\begin{enumerate}
\item The full array \(Y\) is a deterministic function of \(\mathcal O\),
with the same reconstruction map under both hypotheses.
\item The marginal law of \(W\) is common to the two hypotheses.
\item Given \(W\), after subtracting known offsets, the two counts in
the selected row are independent. In each group their two possible
laws are
\begin{equation}\label{eq:conditional-count}
 \Bin(N_+,a)+\Bin(N_-,1-a)
 \quad\text{and}\quad
 \Bin(N_+,1-a)+\Bin(N_-,a),
\end{equation}
where the summands are independent. Under the common law of \(W\),
\(N_+,N_-\) are independent \(\Bin(n,\rho)\) variables, with
\(n=m/2-1\) in \(G_b\) and \(n=m/2\) in the other group.
\end{enumerate}
\end{lemma}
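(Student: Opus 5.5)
The plan is to use that under \(\alpha=0\) each edge probability depends only on the sign product: \(p_{ij}=\sigma(\tau s_is_j)\). Then I would check the three items directly from edge independence. Fix \(j\notin\{u,v\}\) and put \(q_j=\sigma(\tau\omega s_j)\). Under hypothesis \(\omega\), \(A_{uj}\sim\Ber(q_j)\) and \(A_{vj}\sim\Ber(1-q_j)\), since \(\sigma(-z)=1-\sigma(z)\). All edges are mutually independent.

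For item 1, I will write the reconstruction explicitly. Since \(u,v\in G_b\), for \(j\notin\{u,v\}\) we have \(Y_{jk}=\sum_{i\in G_k\setminus\{u,v\}}A_{ji}+C_j\one_{\{k=b\}}\), which is a function of \(W\). Row \(v\) is obtained from
\[
 Y_{vk}=\sum_{j\in G_k\setminus\{u,v\}}C_j-\bigl(Y_{uk}-A_{uv}\one_{\{k=b\}}\bigr)+A_{uv}\one_{\{k=b\}}.
\]
Row \(u\) is observed. None of these formulas involves \(\omega\), so the reconstruction map is the same under both hypotheses.

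For item 2, the following components of \(W\) have laws that do not depend on \(\omega\): the edges among \(V\setminus\{u,v\}\); the edge \(A_{uv}\), which has probability \(\sigma(-\tau)\) because \(s_us_v=-1\); and each \(C_j\). The last holds because \(C_j\) is a sum of independent \(\Ber(q_j)\) and \(\Ber(1-q_j)\) variables, and this law is symmetric under \(q_j\leftrightarrow1-q_j\). All these components are independent of one another, so the joint law of \(W\) is common to both hypotheses.

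Item 3 is the substantive computation. Conditioning on \(W\) fixes \(A_{uj}\) whenever \(C_j\in\{0,2\}\). Hence
\[
 Y_{uk}-A_{uv}\one_{\{k=b\}}-\#\{j\in G_k\setminus\{u,v\}:C_j=2\}
\]
is the sum of \(A_{uj}\) over those \(j\in G_k\setminus\{u,v\}\) with \(C_j=1\). These \(A_{uj}\) are conditionally independent across \(j\), so the two group counts are conditionally independent. For \(C_j=1\),
\[
 \Prob(A_{uj}=1\mid C_j=1)=\frac{q_j^2}{q_j^2+(1-q_j)^2}.
\]
When \(q_j=\sigma(\tau)\) this equals \(e^{2\tau}/(1+e^{2\tau})=\sigma(2\tau)=(1+\tanh\tau)/2=a\), and when \(q_j=\sigma(-\tau)\) it equals \(1-a\). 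Let \(N_\pm\) be the number of \(j\in G_k\setminus\{u,v\}\) with \(s_j=\pm1\) and \(C_j=1\). Then under \(\omega=1\) the law is \(\Bin(N_+,a)+\Bin(N_-,1-a)\), and under \(\omega=-1\) the roles of \(a\) and \(1-a\) swap, which gives \eqref{eq:conditional-count}.

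Finally, I compute the law of \((N_+,N_-)\). For each \(j\),
\[
 \Prob(C_j=1)=q_j^2+(1-q_j)^2=\tfrac12\bigl(1+(2\sigma(\tau)-1)^2\bigr)=\tfrac12\bigl(1+\tanh^2(\tau/2)\bigr)=\rho .
\]
This probability does not depend on \(s_j\) or \(\omega\), and the \(C_j\) are independent. Within-group balance leaves \(m/2-1\) nodes of each sign in \(G_b\setminus\{u,v\}\), because \(u\) and \(v\) carry opposite signs, and \(m/2\) of each sign in the other group. Hence \(N_+\) and \(N_-\) are independent \(\Bin(n,\rho)\) with the stated \(n\).

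I do not expect a real obstacle. The only points needing care are the conditional Bernoulli computation, which uses \(q'=1-q\) to collapse to \(\sigma(2\tau)\), and the bookkeeping of offsets so that the same map works under both hypotheses.
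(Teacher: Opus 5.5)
Your proposal is correct and follows essentially the same route as the paper's proof. You use the same explicit reconstruction of row \(v\) (your formula simplifies to the paper's \(2\one_{\{k=b\}}A_{uv}\) offset), the same \(\omega\)-invariance of the law of each \(C_j\), and the same conditional Bernoulli computation given \(C_j=1\). Your ratio \(q_j^2/(q_j^2+(1-q_j)^2)\) is the paper's conditional odds \(\exp(2\omega\tau s_j)\), and the within-group sign count yields \(\Bin(n,\rho)\) for \(N_\pm\) in the same way.
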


\begin{proof}
For a node \(j\notin\{u,v\}\), all of its incident edges outside the
pair are known from \(W\). Its two remaining edges contribute to the
same trait-group count, and their sum is \(C_j\).
The row of \(u\) is observed directly. For the row of \(v\),
\begin{equation}\label{eq:reconstruction}
 Y_{vk}=
 \sum_{j\in G_k\setminus\{u,v\}}C_j-Y_{uk}
       +2\one_{\{k=b\}}A_{uv},\qquad k=1,2.
\end{equation}
The final term accounts for the pair edge in both endpoint rows.
These formulas reconstruct every entry of \(Y\).

The outside-edge probabilities are unchanged by \(\omega\), and
the logit of \(A_{uv}\) is \(-\tau\). For each remaining node \(j\),
the probabilities of \(A_{uj},A_{vj}\) are
\((1+\omega\mu s_j)/2\) and \((1-\omega\mu s_j)/2\).
Thus
\begin{equation}\label{eq:C-law}
 \Prob(C_j=1)=\rho,\qquad
 \Prob(C_j=0)=\Prob(C_j=2)=\frac{1-\mu^2}{4},
\end{equation}
independently across \(j\). This proves the common marginal law.

Conditioning on \(W\) preserves independence between the different
incident edge pairs. When \(C_j=0\) or \(2\), \(A_{uj}\) is determined.
When \(C_j=1\), the conditional odds of \(A_{uj}=1\) are
\(\exp(2\omega\tau s_j)\). Hence
\begin{equation}\label{eq:conditional-edge}
 \Prob_\omega(A_{uj}=1\mid C_j=1)
       =\frac{1+\omega\lambda s_j}{2}.
\end{equation}
Counting the nodes of each fixed sign with \(C_j=1\) gives
\eqref{eq:conditional-count}. The sign counts before revealing \(C_j\)
are \(n\) of each type, so \eqref{eq:C-law} gives the asserted law of
\(N_+,N_-\). The two groups use disjoint incident edge pairs, yielding
the conditional independence.
\end{proof}

Data processing gives
\begin{equation}\label{eq:oracle-dp}
 \TV(Q_+^Y,Q_-^Y)\le
 \TV(Q_+^{\mathcal O},Q_-^{\mathcal O}).
\end{equation}
Each neighboring comparison uses its own selected pair and its own
oracle. The analysis fixes the other signs, while
\eqref{eq:reconstruction} itself uses only the revealed observations.

\subsection{Binomial smoothing}

We first record the shift bound used for the conditional channels.

\begin{lemma}[A unit shift]\label{lem:shift}
If \(B\sim\Bin(k,a)\), where \(k\ge1\) and \(0<a<1\), then
\begin{equation}\label{eq:shift}
 \TV(B,B+1)=\max_j\Prob(B=j)
 \le\sqrt{\frac{\pi}{8ka(1-a)}}.
\end{equation}
If \(a=(1+\lambda)/2\) and \(|\lambda|\le1/2\), and
\(S\) is the sum of independent \(\Bin(k,a)\) and \(\Bin(k,1-a)\)
variables, then for every integer \(h\),
\begin{equation}\label{eq:S-shift}
 \TV(S,S+h)\le\frac{2|h|}{\sqrt{k}}.
\end{equation}
\end{lemma}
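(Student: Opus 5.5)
The proof has two parts: the unit-shift identity with its local limit bound, then an extension to the symmetric binomial sum \(S\) by smoothing with its less favourable component.

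\textbf{First claim.} For an integer-valued \(B\) with pmf \(f\), the sum \(\sum_j |f(j)-f(j-1)|\) equals \(2\max_j f(j)\) when \(f\) is unimodal: the pmf rises to its mode and then falls, so the sum telescopes to twice the peak. The binomial pmf is log-concave, hence unimodal. This gives
\[
\TV(B,B+1)=\tfrac12\sum_j|f(j)-f(j-1)|=\max_j f(j).
\]
For the bound on the mode I would cite a standard local bound, \(\max_j\Prob(B=j)\le 1/\sqrt{2\pi k a(1-a)}\cdot c\). Failing that, I would use a concentration-function inequality: every Bernoulli summand has \(\max(a,1-a)\le 1-a(1-a)\), and a Kolmogorov--Rogozin/Ushakov type bound gives a constant of \(\sqrt{\pi/8}\) times \((k a(1-a))^{-1/2}\). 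Concretely, I would use the Fourier representation
\[
\Prob(B=j)\le\frac1{2\pi}\int_{-\pi}^{\pi}|1-a+ae^{it}|^k\,dt,
\]
together with \(|1-a+ae^{it}|^2=1-4a(1-a)\sin^2(t/2)\le \exp(-4a(1-a)\sin^2(t/2))\). Then \(\sin^2(t/2)\ge t^2/\pi^2\) on \([-\pi,\pi]\), and the Gaussian integral evaluates to
\[
\frac1{2\pi}\sqrt{\frac{\pi\cdot\pi^2}{2ka(1-a)}}=\sqrt{\frac{\pi}{8ka(1-a)}}.
\]
This reproduces exactly the stated constant, so this is the route I would commit to.

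\textbf{Second claim.} Write \(S=B_1+B_2\) with \(B_1\sim\Bin(k,a)\) and \(B_2\sim\Bin(k,1-a)\) independent. By the triangle inequality over unit steps, \(\TV(S,S+h)\le |h|\,\TV(S,S+1)\); negative \(h\) is handled by symmetry. Convolution with the independent \(B_2\) cannot increase total variation, so
\[
\TV(S,S+1)\le\TV(B_1,B_1+1)\le\sqrt{\frac{\pi}{8ka(1-a)}}.
\]
With \(a(1-a)=(1-\lambda^2)/4\ge 3/16\) when \(|\lambda|\le1/2\), this becomes
\[
\sqrt{\frac{\pi\cdot16}{8\cdot3k}}=\sqrt{\frac{2\pi}{3k}}\approx\frac{1.45}{\sqrt k}\le\frac{2}{\sqrt k}.
\]
This proves \eqref{eq:S-shift}.

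\textbf{Main obstacle.} The only delicate point is getting the exact constant in the first bound. If the inequality \(\sin^2(t/2)\ge t^2/\pi^2\) and the Gaussian integral bound (which extends the integral to \(\R\)) close cleanly, the constant comes out as stated. Otherwise, the second claim has slack: \(1.45<2\) leaves room for a slightly weaker constant.
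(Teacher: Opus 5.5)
Your proposal is correct and follows essentially the same route as the paper. The paper likewise proves the identity by telescoping the unimodal pmf, bounds the mode by Fourier inversion using $1-x\le e^{-x}$ and $\sin^2(t/2)\ge t^2/\pi^2$, and handles $S$ by contraction of total variation under convolution plus the triangle inequality over unit shifts. The hedged alternatives you mention (Kolmogorov--Rogozin type bounds) are unnecessary, because your Fourier computation already yields exactly the stated constant.
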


\begin{proof}
Extend the binomial mass function by zero outside its support.
Its successive absolute differences telescope on each side of its
mode, giving the equality in \eqref{eq:shift}. Fourier inversion bounds
each point mass by
\[
 \frac{1}{2\pi}\int_{-\pi}^{\pi}|1-a+ae^{it}|^k\,dt.
\]
For \(|t|\le\pi\),
\[
 |1-a+ae^{it}|^2
 =1-4a(1-a)\sin^2(t/2)
 \le \exp\{-4a(1-a)t^2/\pi^2\}.
\]
It follows that the preceding integral is at most
\[
 \frac{1}{2\pi}\int_{\R}
  \exp\{-2ka(1-a)t^2/\pi^2\}\,dt
 =\sqrt{\frac{\pi}{8ka(1-a)}}.
\]
When \(|\lambda|\le1/2\), this bound is at most \(2/\sqrt{k}\).
Convolution contracts total variation, giving the same unit-shift
bound for \(S\). Applying the triangle inequality to successive integer
shifts proves \eqref{eq:S-shift}.
\end{proof}

\begin{lemma}[One conditional channel]\label{lem:channel}
Let \(N_+,N_-\) be independent \(\Bin(n,\rho)\) variables, where
\(n\ge1\) and \(\rho=(1+\mu^2)/2\).
For their realized values, let \(F_+,F_-\) be the laws in
\eqref{eq:conditional-count}. If \(|\lambda|\le1/2\), then
\begin{equation}\label{eq:channel-TV}
 \E\TV(F_+,F_-)
 \le 2\sqrt2|\lambda|+2e^{-n/16}.
\end{equation}
\end{lemma}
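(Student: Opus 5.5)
Write the two laws as sums of independent binomials and peel off a common component. Set \(k=\min(N_+,N_-)\) and \(h=N_+-N_-\). Suppose \(N_+\ge N_-\). Then
\[
F_+ = S + \Bin(h,a),\qquad F_- = S + \Bin(h,1-a),
\]
where \(S=\Bin(k,a)+\Bin(k,1-a)\) is independent of the unmatched part. The case \(N_-\ge N_+\) is symmetric, with \(\Bin(|h|,1-a)\) and \(\Bin(|h|,a)\) exchanged. The common component \(S\) is exactly the smoothing variable of Lemma~\ref{lem:shift}. Couple the two unmatched parts edge by edge, or rewrite each as \(|h|-\Bin(|h|,\cdot)\) shifted. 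Their difference \(X-X'\) then has mean \(\pm\lambda|h|\) and is controlled in absolute value.

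Conditioning on the unmatched components and applying \eqref{eq:S-shift} gives
\[
\TV(F_+,F_-)\le \E\,\TV(S,S+X-X')\le \frac{2\E|X-X'|}{\sqrt{k}}.
\]
Choose \(X\sim\Bin(|h|,a)\) and \(X'=|h|-\widetilde X\) with \(\widetilde X\sim \Bin(|h|,a)\), since \(\Bin(|h|,1-a)\) has the law of \(|h|-\Bin(|h|,a)\). A cleaner choice is to compare \(F_+\) directly with the law of \(S+\Bin(|h|,1/2)\)-type midpoints. The first-pass bound I would aim for is
\[
\E|X-X'|\le |\lambda||h|+\sqrt{2\operatorname{Var}}\lesssim |\lambda||h|+\sqrt{|h|}.
\]
This gives the conditional bound \(2(|\lambda||h|+\sqrt{|h|})/\sqrt{k}\), capped at \(1\).

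Next, average over \(N_\pm\). The imbalance \(h=N_+-N_-\) has mean zero and variance \(2n\rho(1-\rho)\le n/2\), so \(\E|h|\le\sqrt{n/2}\). On the good event \(k\ge n\rho/2\), say, the main term becomes
\[
2|\lambda|\,\E|h|/\sqrt{k}\le 2|\lambda|\sqrt{n/2}\big/\sqrt{n/4}=2\sqrt2|\lambda|,
\]
which matches the leading constant in \eqref{eq:channel-TV}. The event \(\{\min(N_+,N_-)<n/4\}\) has probability at most \(2e^{-n/16}\) by Hoeffding, using \(\rho\ge1/2\) and deviation \(n/4\): each tail is \(e^{-2(n/4)^2/n}=e^{-n/8}\). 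Bounding TV by \(1\) there contributes the \(e^{-n/16}\) term with room to spare.

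The main obstacle is the \(\sqrt{|h|}\) fluctuation term. Naively it contributes \(\E\sqrt{|h|}/\sqrt{k}\sim n^{-1/4}\). That vanishes, but it does not produce the exact form \(2\sqrt2|\lambda|+2e^{-n/16}\), which has no polynomial remainder. So the coupling must be sharper: the unmatched difference should shift \(S\) only by an amount whose expectation is \(|\lambda||h|\) with no variance penalty. My first attempt is to decompose \(\Bin(|h|,a)\) as a \(\Bin(|h|,1/2)\) thinning plus \(\lambda\)-biased coins. Concretely, each coin equals a fair coin with probability \(1-|\lambda|\) and equals \(1\) with probability \(|\lambda|\). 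Its counterpart is a fair coin with probability \(1-|\lambda|\) and equals \(0\) with probability \(|\lambda|\). Sharing the fair parts exactly between \(F_+\) and \(F_-\), the laws differ only through a \(\Bin(|h|,|\lambda|)\) count of unit shifts, which is absorbed by \(S\) via \eqref{eq:S-shift}. Its expectation is \(|\lambda||h|\), so the conditional bound becomes \(2|\lambda||h|/\sqrt{k}\). Averaging then gives \(2\sqrt2|\lambda|+2e^{-n/16}\) as computed above.
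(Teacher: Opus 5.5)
Your final argument is correct and is essentially the paper's proof. You peel off the common \(S\sim\Bin(k,a)+\Bin(k,1-a)\) and couple the unmatched binomials monotonically; your fair-coin-plus-selector construction is the paper's shared-uniform coupling and gives \(\E|X-X'|=|\lambda|\,|h|\). You then apply \eqref{eq:S-shift} by convexity and average using \(\E|h|\le\sqrt{2n\rho(1-\rho)}\) on an event where \(k\) is of order \(n\), with a Chernoff/Hoeffding bound on its complement.

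Three small clean-ups. Drop the first-pass independent coupling. Say explicitly that the selector, and not only the fair coin, is shared. Take the good event to be \(\{k\ge n/4\}\), so it is exactly the complement of the event you bound by \(2e^{-n/8}\le 2e^{-n/16}\).
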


\begin{proof}
Condition on \(N_+,N_-\), and put
\(k=\min(N_+,N_-)\), \(d=|N_+-N_-|\).
When \(k\ge1\), the matched terms in both sums have the common
law
\[
 S\sim\Bin(k,a)+\Bin(k,1-a).
\]
The unmatched terms have laws \(\Bin(d,a)\) and \(\Bin(d,1-a)\),
possibly in the opposite order. Couple these unmatched sums using
the same \(d\) independent uniforms, independently of \(S\).
Calling the coupled variables \(U_+,U_-\), this construction gives
\(\E|U_+-U_-|=d|\lambda|\).
Convexity of total variation and Lemma~\ref{lem:shift} yield
\begin{equation}\label{eq:conditional-TV}
 \TV(F_+,F_-)
 \le\min\left\{1,\frac{2|\lambda|d}{\sqrt{k}}\right\}.
\end{equation}
For \(k=0\), use the bound 1.

On
\[
 H=\{N_+\ge n\rho/2,\ N_-\ge n\rho/2\},
\]
we have \(k\ge n\rho/2>0\). The multiplicative binomial Chernoff
bound gives
\[
 \Prob(H^c)\le2e^{-n\rho/8}\le2e^{-n/16},
\]
and
\[
 \E d\le
 \sqrt{\operatorname{Var}(N_+-N_-)}
 =\sqrt{2n\rho(1-\rho)}.
\]
Averaging \eqref{eq:conditional-TV} over \(H\) and its complement,
\[
 \E\TV(F_+,F_-)
 \le4|\lambda|\sqrt{1-\rho}+2e^{-n/16}
 \le2\sqrt2|\lambda|+2e^{-n/16},
\]
since \(\rho\ge1/2\).
\end{proof}

\begin{proposition}[Local orientation overlap]\label{prop:local-TV}
For every comparison defined at the start of
Section~\ref{sec:local},
\begin{equation}\label{eq:neighbor-TV}
 \TV(Q_+^Y,Q_-^Y)\le\delta_m.
\end{equation}
The bound is uniform in the cell-balanced signs outside the selected pair.
\end{proposition}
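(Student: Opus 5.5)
We combine the data-processing inequality \eqref{eq:oracle-dp} with the structure from Lemma~\ref{lem:oracle} and the channel bound of Lemma~\ref{lem:channel}. First, if \(4\sqrt2\lambda+4e^{-(m/2-1)/16}\ge1\) then \(\delta_m=1\) and there is nothing to prove. We may therefore assume this quantity is below \(1\). In particular \(\lambda\le 1/(4\sqrt2)<1/2\), so the hypothesis \(|\lambda|\le1/2\) of Lemma~\ref{lem:channel} holds. In fact \eqref{eq:amplitude} already gives \(\lambda\le1/16\). Since \(m\in4\N\), we have \(n=m/2-1\ge1\) in \(G_b\).

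Next, by \eqref{eq:oracle-dp} it suffices to bound \(\TV(Q_+^{\mathcal O},Q_-^{\mathcal O})\). The oracle is \(\mathcal O=(W,Y_{u1},Y_{u2})\), and by part~2 of Lemma~\ref{lem:oracle} the marginal law of \(W\) is common to both hypotheses. Disintegrating with respect to \(W\) therefore gives
\[
 \TV(Q_+^{\mathcal O},Q_-^{\mathcal O})
 =\E_W\,\TV\bigl(\mathcal L_+(Y_{u1},Y_{u2}\mid W),\mathcal L_-(Y_{u1},Y_{u2}\mid W)\bigr).
\]
This is the standard identity for joint laws sharing a first marginal, proved by integrating \(|dQ_+-dQ_-|\) over fibres.

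Given \(W\), part~3 of Lemma~\ref{lem:oracle} says that, after subtracting the known offsets, the two counts are independent with laws \(F_\pm^{(1)}\otimes F_\pm^{(2)}\). Shifting by a \(W\)-measurable offset does not change total variation. For product measures we use the subadditivity
\[
 \TV(F_+^{(1)}\otimes F_+^{(2)},F_-^{(1)}\otimes F_-^{(2)})\le\TV(F_+^{(1)},F_-^{(1)})+\TV(F_+^{(2)},F_-^{(2)}),
\]
which follows from a coordinatewise coupling or the triangle inequality through a hybrid measure. The conditional law in each group depends on \(W\) only through that group's \((N_+,N_-)\). Under the common law of \(W\), these are independent \(\Bin(n,\rho)\) variables with \(n=m/2-1\) or \(m/2\).

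Finally, taking \(\E_W\) and applying Lemma~\ref{lem:channel} to each group gives
\[
 \TV(Q_+^{\mathcal O},Q_-^{\mathcal O})\le 4\sqrt2\lambda+2e^{-(m/2-1)/16}+2e^{-(m/2)/16}\le 4\sqrt2\lambda+4e^{-(m/2-1)/16}.
\]
Together with the trivial bound \(\TV\le1\), this yields \(\delta_m\). Nothing in the argument depends on the signs outside the pair beyond cell balance, which fixes the counts \(n\); the bound is therefore uniform in those signs. The only delicate step is the conditional decomposition. There we must check that the \(W\)-dependent offsets are identical under both hypotheses, so that subtracting them is legitimate. This holds because the offsets are sums of \(A_{uj}\) over \(j\) with \(C_j\in\{0,2\}\), plus \(A_{uv}\), and all of these are determined by \(W\).
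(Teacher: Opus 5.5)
Your proposal is correct and follows the paper's proof. Both arguments use data processing to pass to the oracle, disintegrate over the common $W$-marginal, apply the product inequality to the two conditionally independent channels, invoke Lemma~\ref{lem:channel} with $n=m/2-1$ and $n=m/2$, and cap the result by $\TV\le1$. Your extra checks are details the paper leaves implicit: that $|\lambda|\le1/2$ and $n\ge1$ hold, and that the subtracted offsets are determined by $W$ and identical under both hypotheses.
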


\begin{proof}
The two oracle laws have the same \(W\)-marginal. Their total variation
distance is therefore the expectation of their conditional total
variation distance given \(W\). By Lemma~\ref{lem:oracle}, those
conditional laws are products of two channels. The product inequality
for total variation bounds their distance by the sum of the channel
distances. Lemma~\ref{lem:channel}, with \(n=m/2-1\) and \(n=m/2\),
then gives
\[
 \TV(Q_+^{\mathcal O},Q_-^{\mathcal O})
 \le4\sqrt2\lambda+4e^{-(m/2-1)/16}.
\]
Combine this with \eqref{eq:oracle-dp} and the bound \(\TV\le1\).
\end{proof}

Under \eqref{eq:amplitude},
\(4\sqrt2\lambda\le\sqrt2/4\), so \(\delta_m<1/2\) for all
sufficiently large \(m\).
The selected amplitude interval supplies a uniform margin in the
overlap bound.

\section{From local overlap to matrix loss}\label{sec:lower}

We now prove \eqref{eq:P-lower} and \eqref{eq:R-lower}.
In each trait group, choose \(m/2\) anchor nodes, with half assigned
each sign. Partition the other \(m/2\) nodes into \(m/4\) ordered
pairs. There are \(a_0=r/2\) anchors and \(J=r/4\) variable pairs,
denoted \((u_\ell,v_\ell)\), \(\ell=1,\ldots,J\).
For \(\omega\in\{-1,1\}^J\), assign signs
\((\omega_\ell,-\omega_\ell)\) to pair \(\ell\), retain the anchor
signs, and set \(\alpha=0\). This defines a hypercube contained in
\(\Theta_r(M)\), with both trait groups sign-balanced.
Let \(Q_\omega\) be its actual joint ARD law.
Proposition~\ref{prop:local-TV} bounds every neighboring pair of these
laws by \(\delta_m\).

For bit \(\ell\), use the ordered coordinates
\begin{equation}\label{eq:anchor-blocks}
 I_\ell=
 \{(u_\ell,j),(j,u_\ell),(v_\ell,j),(j,v_\ell):
                       j\text{ is an anchor}\}.
\end{equation}
The sets \(I_\ell\) are mutually disjoint and have size \(4a_0=2r\).
The target vector on \(I_\ell\) depends only on \(\omega_\ell\).
For the residual target, its two values \(v_{\ell,+},v_{\ell,-}\)
satisfy
\begin{equation}\label{eq:block-distance}
 \|v_{\ell,+}-v_{\ell,-}\|_2^2
 =|I_\ell|(2\tau)^2=8r\tau^2.
\end{equation}

Fix the remaining bits. For an arbitrary estimator output
\(z=\widehat R_{I_\ell}\), let \(q_+,q_-\) be densities of the two
observation laws with respect to a common dominating measure.
The identity
\[
 \|z-v_+\|_2^2+\|z-v_-\|_2^2
 \ge\frac12\|v_+-v_-\|_2^2
\]
gives
\begin{align}
 \frac12\left\{\E_+\|z-v_+\|_2^2+
                   \E_-\|z-v_-\|_2^2\right\}
 &\ge \frac{\|v_+-v_-\|_2^2}{4}
                   \int\min(q_+,q_-)\notag\\
 &\ge 2r\tau^2(1-\delta_m).\label{eq:block-risk}
\end{align}
For a randomized estimator, include its independent random seed
in the observation; the overlap is unchanged.

Average \eqref{eq:block-risk} over the remaining bits and add over
the disjoint coordinate sets \(I_\ell\). The supremum risk dominates
the uniform-prior average, so
\[
 \sup_{\Theta_r(M)}
 \E L_r(\widehat R,R_s)
 \ge
 \frac{J\,2r\tau^2(1-\delta_m)}{r(r-1)}
 =\frac{r\tau^2}{2(r-1)}(1-\delta_m).
\]
Taking the infimum proves \eqref{eq:R-lower}.

At zero activity, \(p_{ij}=1/2+\kappa s_is_j\).
The two probability targets differ by \(2\kappa\) on each coordinate
in \(I_\ell\). Replacing \(\tau\) by \(\kappa\) in
\eqref{eq:block-distance}--\eqref{eq:block-risk} proves
\eqref{eq:P-lower}.
The assembly uses \(\Theta(r)\) local orientations, each affecting
\(\Theta(r)\) ordered entries. This is the scale at which the
local ambiguities yield a positive normalized matrix loss.

\section{Recovery from adjacency}\label{sec:upper}

We prove the upper bounds over the full class \eqref{eq:class}.
For a balanced sign vector \(t\), define
\begin{equation}\label{eq:score}
 Q_A(t)=\sum_{i<j}(A_{ij}-1/2)t_it_j.
\end{equation}
Let \(\widehat s\) be the lexicographically first maximizer over
balanced vectors satisfying \(t_1=1\).
One of \(s,-s\) meets this constraint, and both give the same
score. This finite optimization defines an observation-based estimator
without using the unknown activity vector.

\begin{lemma}[Uniform sign-product recovery]\label{lem:adjacency}
If \(r-1\ge M^2/\kappa^2\), then
\begin{equation}\label{eq:sign-recovery}
 \sup_{(\alpha,s)\in\Theta_r(M)}
 \Prob_{\alpha,s}\{\widehat s\widehat s^\top\ne ss^\top\}
 \le2\erradj_r.
\end{equation}
\end{lemma}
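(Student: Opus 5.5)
The plan is a union bound over competitors of the true sign vector, combining a deterministic lower bound on the expected score gap with Hoeffding's inequality. Since \(Q_A(t)=Q_A(-t)\), the estimator has \(\widehat s\widehat s^\top\ne ss^\top\) only if some balanced \(t\) with \(tt^\top\ne ss^\top\) satisfies \(Q_A(t)\ge Q_A(s)\). Ties are included here, because the lexicographic rule may select such a \(t\).

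For such a \(t\), let \(S=\{i:t_i\ne s_i\}\) and \(k=|S|\). Replacing \(t\) by \(-t\) exchanges \(S\) with its complement, so I may take \(1\le k\le r/2\). The pairs on which the sign products disagree are exactly the cut pairs
\[
 D=\{\{i,j\}: i\in S,\ j\notin S\},\qquad |D|=k(r-k)\ge kr/2 .
\]
Only these pairs contribute to the score gap:
\[
 Q_A(s)-Q_A(t)=\sum_{\{i,j\}\in D}X_{ij},\qquad X_{ij}=2s_is_j(A_{ij}-1/2)\in\{-1,1\}.
\]
The \(X_{ij}\) are independent.

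Next I bound the mean of the gap uniformly over the class \eqref{eq:class}. Since \(\sigma\) is \(1/4\)-Lipschitz, with \(c_{ij}=\alpha_i+\alpha_j\) we get
\[
 s_is_j\bigl(p_{ij}-\tfrac12\bigr)=\sigma(\tau+s_is_jc_{ij})-\tfrac12\ge\kappa-\tfrac14|c_{ij}|
\]
for either value of \(s_is_j\). Summing over \(D\) and applying Cauchy--Schwarz with \eqref{eq:activity-identity}, i.e. \(\sum_{i<j}c_{ij}^2\le4M^2\), gives
\[
 \E\sum_{D}X_{ij}\ge2|D|\kappa-\tfrac12\sqrt{|D|}\cdot2M=2|D|\kappa-M\sqrt{|D|}.
\]
Because \(|D|\ge r-1\ge M^2/\kappa^2\), we have \(M\sqrt{|D|}\le\kappa|D|\), and the mean is at least \(\kappa|D|\). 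This is the only place the condition \(r-1\ge M^2/\kappa^2\) enters. It is also the one step that needs care, since the activity effects must be absorbed uniformly in \(t\); the cut-size lower bound \(|D|\ge r-1\) is what makes Cauchy--Schwarz suffice.

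Hoeffding's inequality for \(|D|\) independent variables of range \(2\) then gives
\[
 \Prob\{Q_A(t)\ge Q_A(s)\}\le\exp\!\left(-\frac{2\kappa^2|D|^2}{4|D|}\right)=e^{-\kappa^2|D|/2}\le e^{-k\kappa^2r/4}.
\]
Each competitor \(t\) is represented by a flip set \(S\) of size \(k\le r/2\) relative to \(s\). For \(k=r/2\) there may be two such sets, \(S\) and its complement; in either case the number of competitors with a given \(k\) is at most \(2\binom rk\), and the factor \(2\) absorbs the double representation. Summing over \(k\) yields
\[
 \Prob\{\widehat s\widehat s^\top\ne ss^\top\}\le2\sum_{k\ge1}\binom rk e^{-k\kappa^2r/4}=2\bigl[(1+e^{-\kappa^2r/4})^r-1\bigr]=2\erradj_r.
\]
This bound does not depend on \((\alpha,s)\in\Theta_r(M)\), which proves \eqref{eq:sign-recovery}.
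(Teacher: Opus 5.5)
Your proof is correct and matches the paper's argument essentially step for step. You use the same cut-set form of the score gap with \(|D|=k(r-k)\ge r-1\), the same \(1/4\)-Lipschitz plus Cauchy--Schwarz bound that absorbs the activity effects (your pointwise bound \(s_is_j(p_{ij}-\tfrac12)\ge\kappa-\tfrac14|c_{ij}|\) is the paper's \(|\varepsilon_{ij}|\le\tfrac14|\alpha_i+\alpha_j|\)), Hoeffding with range \(2\), and the union bound using \(2\binom rk\) competitors and \(k(r-k)\ge kr/2\). One small point: the factor \(2\) does not need to ``absorb'' anything, since indexing by flip sets of size at most \(r/2\) already gives at most \(\binom rk\) events, and overcounting in a union bound is harmless anyway.
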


\begin{proof}
Write
\[
 p_{ij}=\frac12+\kappa s_is_j+\varepsilon_{ij}.
\]
Since \(\sigma\) is \(1/4\)-Lipschitz,
\begin{equation}\label{eq:activity-error}
 \sum_{i<j}\varepsilon_{ij}^2
 \le\frac1{16}\sum_{i<j}(\alpha_i+\alpha_j)^2
 \le M^2/4.
\end{equation}
For a competing vector \(t\notin\{s,-s\}\), let \(h\) be its minimum
Hamming distance to \(s,-s\), so \(1\le h\le r/2\).
Its edge products differ from the true ones on a set \(D\) with
\(d=|D|=h(r-h)\ge r-1\). On this set,
\begin{equation}\label{eq:score-gap}
 Q_A(s)-Q_A(t)
 =2\sum_{\{i,j\}\in D}s_is_j(A_{ij}-1/2).
\end{equation}
By \eqref{eq:activity-error} and Cauchy--Schwarz,
\[
 \E_{\alpha,s}\{Q_A(s)-Q_A(t)\}
 \ge2\kappa d-M\sqrt d\ge\kappa d.
\]
The summands in \eqref{eq:score-gap} are independent and have range
length 2. Hoeffding's inequality \citep{hoeffding1963probability}
therefore gives
\begin{equation}\label{eq:competitor}
 \Prob_{\alpha,s}\{Q_A(t)\ge Q_A(s)\}
 \le e^{-\kappa^2d/2}.
\end{equation}
There are at most \(2\binom rh\) sign vectors at distance \(h\)
from \(\{s,-s\}\), an upper bound that also covers the balance and
gauge restrictions. A union bound, including score ties, yields
\begin{align*}
 \Prob_{\alpha,s}\{\widehat s\widehat s^\top\ne ss^\top\}
 &\le2\sum_{h=1}^{r/2}\binom rh e^{-\kappa^2h(r-h)/2}\\
 &\le2\sum_{h=1}^r\binom rh e^{-\kappa^2hr/4}
 =2\left\{(1+e^{-\kappa^2r/4})^r-1\right\}.
\end{align*}
The bound is uniform in \((\alpha,s)\).
\end{proof}

To estimate the residual, use \(\widehat R=R_{\widehat s}\).
Its loss is zero on exact sign-product recovery and is at most
\(4\tau^2\) otherwise. Lemma~\ref{lem:adjacency} proves
\eqref{eq:R-upper}.

For the probability matrix, use
\begin{equation}\label{eq:P-estimator}
 \widehat P_{ij}=\frac12+\kappa\widehat s_i\widehat s_j,\qquad i\ne j.
\end{equation}
On exact sign-product recovery, \eqref{eq:activity-error} gives
\[
 L_r(\widehat P,P_{\alpha,s})
 =\frac{2}{r(r-1)}\sum_{i<j}\varepsilon_{ij}^2
 \le\frac{M^2}{2r(r-1)}.
\]
On the complement, the loss is at most 1. Combining the two events
with Lemma~\ref{lem:adjacency} gives \eqref{eq:P-upper}.
Finally,
\[
 0\le\erradj_r
 \le\exp\{r e^{-\kappa^2r/4}\}-1\longrightarrow0,
\]
which completes the proofs of
Theorems~\ref{thm:probability} and \ref{thm:residual}.

\section{Discussion}\label{sec:discussion}

Theorem~\ref{thm:probability} establishes an impossibility result for
labeled probability-matrix recovery from complete ARD under a fixed
two-group trait design. Its nonvanishing minimax lower bound holds at a
fixed positive signal amplitude, while full adjacency permits uniform
recovery on the same activity-plus-signal class. Combined with
population-law identification, this gives an identifiable network
experiment with a persistent statistical obstruction to recovery from
a single aggregated array.

The obstruction arises when each trait group contains equal numbers of
both latent signs. The two-node oracle preserves the full observed array
and reveals most of the graph, yet retains ambiguity about a local sign
orientation. The anchored construction places linearly many such
orientations on disjoint sets of target coordinates, making their
combined contribution to normalized matrix loss nonvanishing.

The role of labels is explicit in the loss. The hard family is generated
by within-group permutations of latent signs, while each observation
retains the original respondent labels. Its probability matrices and
fixed-label laws are distinct. Measuring error only up to arbitrary
within-group node permutations would collapse this family to one
equivalence class, producing a different estimation problem.

The activity constraint provides a common neighborhood for the two
experiments. It permits unknown additive effects while controlling
their total perturbation of the adjacency score. Consequently, the
adjacency estimator can recover the sign products and estimate \(P\)
in normalized loss without separately estimating each activity.

Trait refinement offers a natural route from this fixed-design
separation to a broader recovery theory. Splitting a group changes both
the number of orientation channels and the amount of binomial
smoothing within each channel. Informative traits can also align with
latent memberships, the regime captured by profile-based recovery
results. A design-sensitive theory would relate these sources of
information to both lower bounds and attainable recovery risk.

\section*{Acknowledgments}

Mira, an AI assistant accessed through OpenAI Codex, was used in
mathematical exploration, proof development, finite computational
checks, and manuscript preparation. Additional AI-assisted reviews
were used to examine the arguments.

\bibliographystyle{plainnat}
\bibliography{references}
\end{document}